\newtheorem{conjecture}{Conjecture}
\newtheorem{theorem}{Theorem}
\newtheorem{assumption}{Assumption}
\title{On Perles' configuration}
\author{Jozsef Solymosi}
\affil{University of British Columbia, Vancouver, Canada, and Obuda University, Budapest, Hungary}
\date{}                     
\begin{document}
\maketitle

\begin{abstract}
In the 1960s, Micha Perles constructed a point-line arrangement in the plane on nine points, which cannot be realized only by points with rational coordinates. 
Gr\"unbaum conjectured that Perles' construction is the smallest: any geometric arrangement on eight or fewer points if it is realizable with real coordinates in the plane, it is also realizable with rational coordinates. In this paper, we prove the conjecture.
\end{abstract}

\section{Introduction}
The problem we are going to consider has a long history in Euclidean geometry: Given a set of points, $P$ and a set of its 3-element subsets, ${\cal{S}}=\{S_1,\ldots,S_k\}$ where $S_i\in P\times P\times P$. Is there an arrangement of points in the Euclidean plane such that precisely the triples listed in $\cal{S}$ are collinear? Here, we are interested in configurations that can be realized with real but not rational coordinates. 

The oldest known construction for arrangements with no realization with rational coordinates only comes from von Staudt's work on the ``algebra of throws'' \cite{vonSt}. MacLane applied von Staudt's projective arithmetics to give an 11-point construction corresponding to $\sqrt{2}$ in \cite{MacLane} (fig. \ref{Mac}). 

\begin{figure}[h]
\centering
\includegraphics[scale=.6]{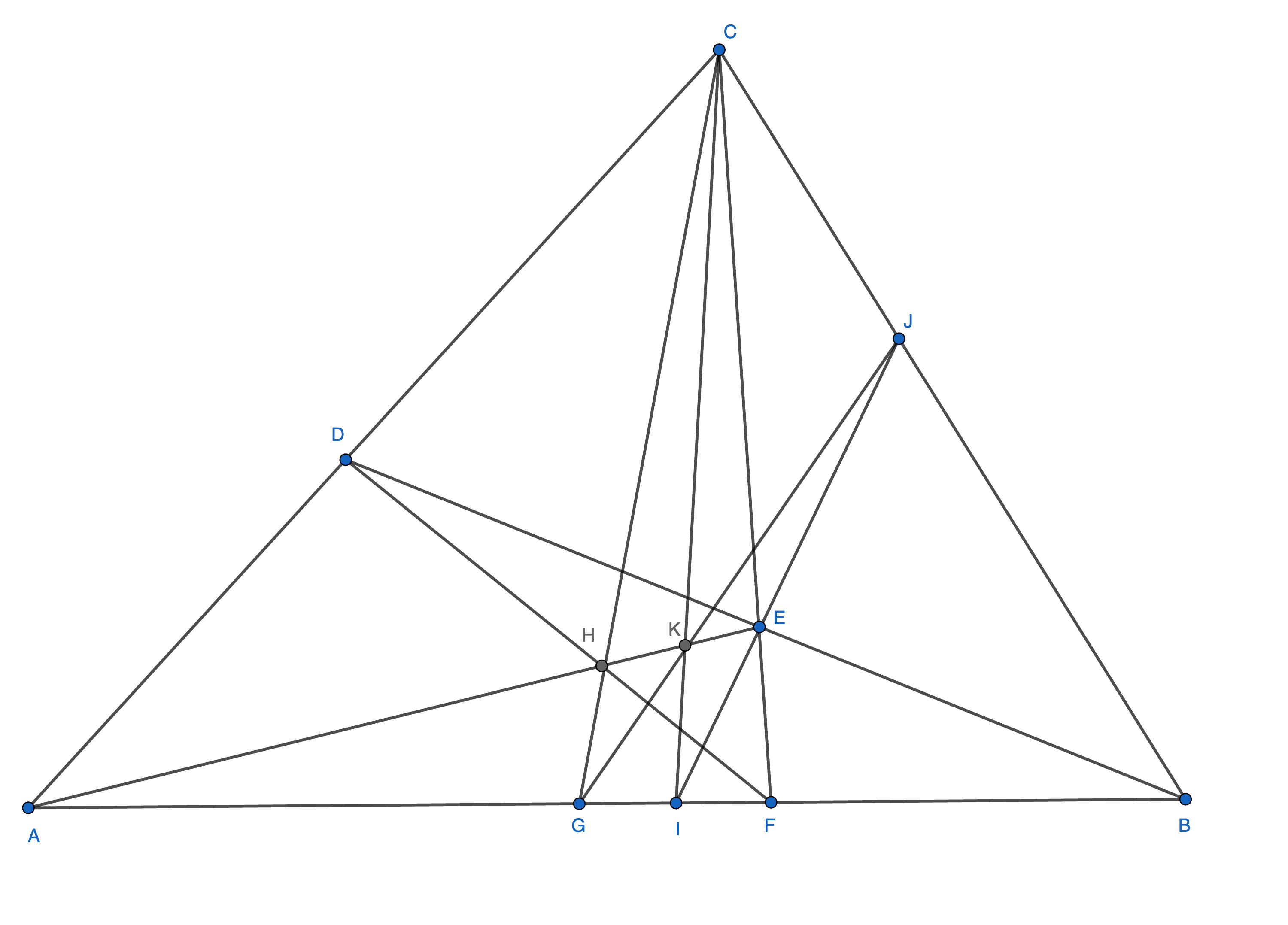}
\caption{This arrangement can not be realized with points from the integer grid}
\label{Mac}
\end{figure}

In the 60s, Perles gave an arrangement of nine points and nine lines such that no isomorphic arrangement exists where all points are rational (fig. \ref{Per}). 
The same configuration appears in a different context in \cite{ISS} as ``type 11''.
For the complete proof that Perles's construction can't be realized with rational coordinates only and more references on the subject, see Ziegler's excellent article in the Mathematical Intelligencer \cite{Zi}. Gr\"unbaum conjectured that Perles' configuration is the smallest; every arrangement on eight points has a rational realization \cite{BG}, \cite{Zi}. 

\begin{figure}[h]
\centering
\includegraphics[scale=.7]{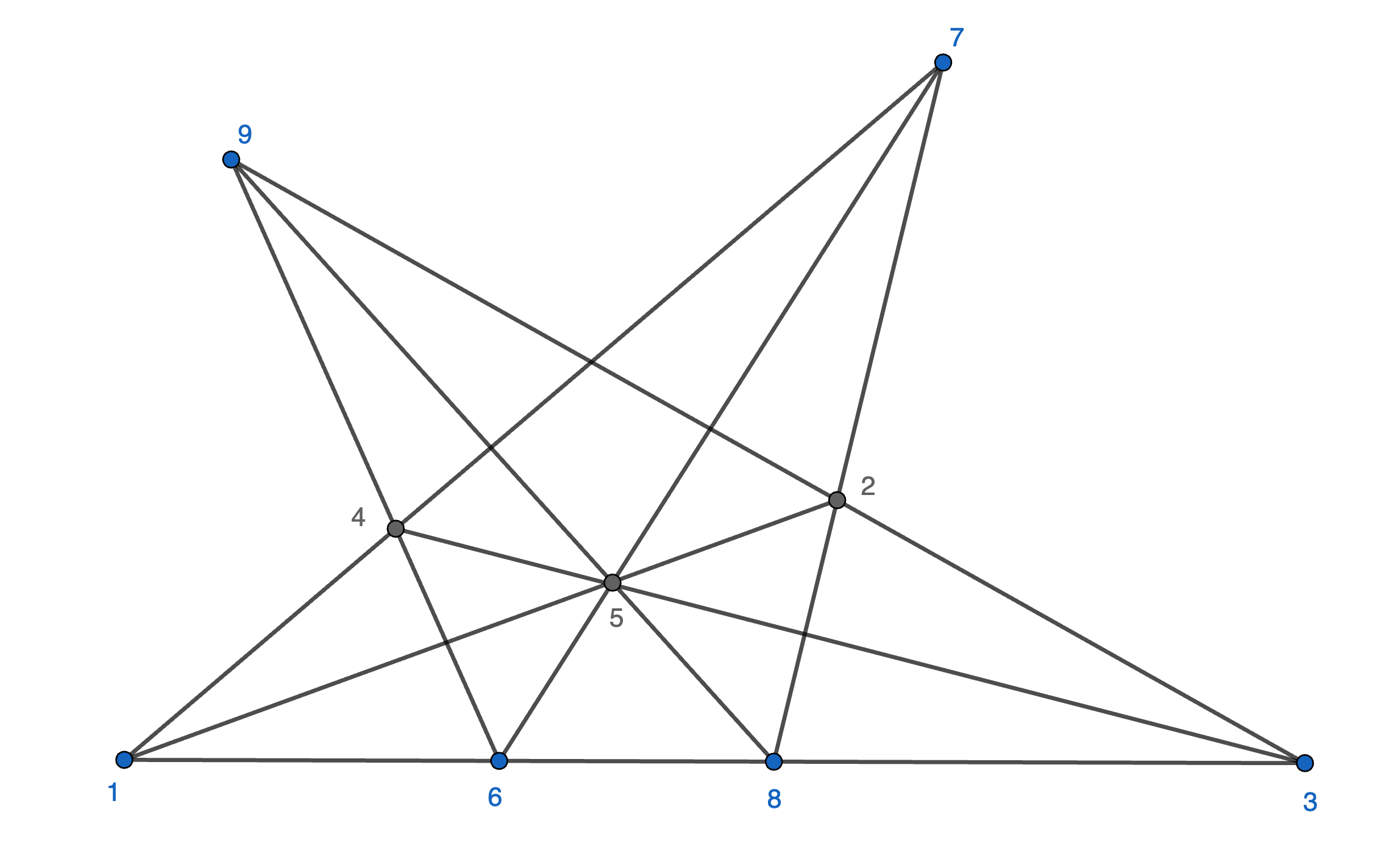}
\caption{Perles configuration. The labelling follows the list in \cite{ISS}}
\label{Per}
\end{figure}

\medskip
Arrangements with no four points collinear were also considered as early as the 19th century. Special attention was given to the $n_3$ {\em configurations}, arrangements where all the $n$ points have exactly $3$ collinear triples incident \footnote{We say a point $p$ is \textit{incident} to a line $\ell$ if $p\in\ell$, i.e. $p$ is a point of the line. Similarly, a point $p$ is {\em incident} to a collinear point set $S$ if $p\in S$.} to them. 
An incidence structure with a realization in the Euclidean plane is {\em rational} if it has a realization using points with rational coordinates.
The $n_3$ configurations were listed for small $n$ values, and Strumfeld and White proved the following theorem in \cite{SW}.

\begin{theorem}\label{n3_thm}
All $n_3$ configurations are rational for $9\leq n\leq 12$.
\end{theorem}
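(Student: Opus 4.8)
The plan is to reduce the geometric question to a finite computation over the complete list of combinatorial $n_3$ configurations. For each fixed $n\in\{9,10,11,12\}$ there are only finitely many combinatorial types (three for $n=9$, ten for $n=10$, and $31$ and $229$ for $n=11$ and $n=12$), and each type is a rank-$3$ incidence structure whose realizations over a field $K$ are the $3\times n$ matrices over $K$, read as points in homogeneous coordinates, whose vanishing and non-vanishing $3\times 3$ minors encode exactly the prescribed collinear triples. Since ``rational'' is only asserted of types that are realizable over $\mathbb{R}$, I may discard the non-realizable types (those whose real realization space is empty) at the outset. For each surviving type, the first step is to normalize the projective gauge: choose four of its points that form a projective frame and send them to the standard rational coordinates $(1,0,0),(0,1,0),(0,0,1),(1,1,1)$. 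This removes the $PGL_3$ ambiguity at no cost in rationality.

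The workhorse is the observation that the two primitive operations of plane projective geometry, the join of two points and the meet of two lines, are both given by the cross product and are therefore defined over the prime field. Hence, if a configuration is \emph{linearly constructible}, meaning its points can be ordered so that, after the initial frame, each successive point is either free (placed in general position, which can be done with rational coordinates) or is forced to be the intersection of two lines already spanned by earlier points, then every coordinate produced along the way lies in $\mathbb{Q}$, and the type is automatically rational. The next step is then to traverse the list and isolate the types admitting such a construction sequence; for these rationality is immediate. In particular all three $9_3$ configurations fall into this class, including Pappus, where the final incidence is \emph{guaranteed} by Pappus's theorem rather than imposed as an independent constraint.

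The main obstacle is the remaining types that are not linearly constructible, where at some stage a point is over-determined: it must lie simultaneously on three concurrent lines, and the consistency of that concurrence is a polynomial condition on the free parameters chosen earlier. For such a type the real realization space is cut out, inside the affine space of free parameters, by one (or a few) such polynomials, and the task becomes to exhibit a rational point on this variety that also satisfies the non-degeneracy (distinctness and non-collinearity) inequalities. The strategy here would be to parameterize the free choices, write each concurrence condition explicitly, and verify for every problematic type that the condition is \emph{linear} in a suitable parameter, or otherwise defines a variety that is birational over $\mathbb{Q}$ to an affine space, so that its rational points are Zariski dense and a generic rational choice yields a genuine configuration. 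The hard part is precisely this: the case analysis must be carried out for each non-constructible type up to $n=12$, keeping the realization nondegenerate throughout, and one must confirm that no type produces an irreducible component whose only real points are irrational. The reason the argument terminates successfully at $n=12$, rather than for all $n$, is that beyond this range the realization spaces can acquire components carrying no rational parameterization at all, which is the same kind of obstruction that Perles's arrangement exhibits for general point-line arrangements, so it is the finiteness and low complexity of the case analysis in this range that makes the theorem hold.
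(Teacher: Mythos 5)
The first thing to say is that the paper does not prove this statement at all: it is quoted as a theorem of Sturmfels and White \cite{SW}, with the added remark (via Gropp \cite{Gr}) that their original list of $12_3$ configurations was incomplete. So there is no in-paper proof to measure your attempt against; what you have written is a reconstruction of the Sturmfels--White strategy, and its skeleton is pointed in the right direction: reduce to the finite list of combinatorial types, kill the $PGL_3$ ambiguity with a rational projective frame, observe that joins and meets are cross products and hence defined over the prime field so that linearly constructible types are automatically rational, and for the remaining types look for rational points on the realization variety subject to the non-degeneracy inequalities.

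As a proof, however, the attempt has a genuine gap, and it is the load-bearing one: everything substantive is deferred to ``the case analysis must be carried out for each non-constructible type up to $n=12$.'' That analysis --- over the $3+10+31+229$ combinatorial types, determining which are realizable over $\mathbb{R}$ and exhibiting a rational point on the realization space of each over-determined one --- \emph{is} the theorem; there is no a priori reason the concurrence conditions must be linear in some parameter or cut out a variety birational to affine space over $\mathbb{Q}$, so this must actually be verified type by type (and getting the enumeration itself right was a nontrivial historical issue). Separately, your closing explanation of why the result stops at $n=12$ is wrong: no $n_3$ configuration with a real but no rational realization is known, Gr\"unbaum's conjecture (stated in this very paper) asserts that none exists, and the $13_3$ case has since been settled affirmatively by Kocay \cite{Ko}. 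The theorem stops at $12$ because that is where the verified case analysis stopped, not because a Perles-type obstruction appears at $13$.
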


The cases $10_3$ and $9_3$ were proved earlier. Strumfeld and White used an incomplete list of configurations, as Gropp noted in \cite{Gr}. 
Gropp's paper gives a complete review of the realizations of configurations.
Gr\"unbaum stated the following conjecture in \cite{Gr2}:

\begin{conjecture}[Gr\"unbaum]
    Any $n_3$ configuration with a real realization also has a rational realization.
\end{conjecture}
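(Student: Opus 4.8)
The cleanest sufficient condition for rationality is \emph{ruler-constructibility}: if the points of the configuration can be ordered $p_1,\dots,p_n$ so that each $p_i$ is either placed freely, or forced to lie on a single already-constructed line, or obtained as the intersection of two already-constructed lines, then a rational realization exists. Each of these three operations amounts to solving a linear system with rational coefficients, so all coordinates stay in $\mathbb{Q}$ once we fix, at no cost, a projective frame of four points in general position with rational coordinates. The first step is therefore to prove that every $n_3$ configuration admitting a real realization is ruler-constructible; the genuinely rigid cases are treated separately below.

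To make this robust I would pass to the realization variety $R(C)\subset(\mathbb{RP}^2)^n$ cut out by the $n$ collinearity determinants. It is defined over $\mathbb{Q}$, and the locus $R^{\circ}(C)$ where all $n$ points and $n$ lines are distinct and no \emph{unintended} triple is collinear is Zariski-open and, by hypothesis, nonempty, since it contains the given real realization. A dimension count is encouraging: modulo $PGL_3$ the expected dimension is $2n-8-n=n-8>0$ for $n>8$, so the realization space should be positive-dimensional and ``movable.'' If I can show that $R(C)$ is unirational over $\mathbb{Q}$, then its rational points are Zariski-dense and hence meet the nonempty open set $R^{\circ}(C)$, producing a bona fide rational realization. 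Ruler-constructibility is exactly what exhibits such a unirational (indeed rational) parametrization, with the freely placed points as parameters.

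The main obstacle is precisely the mechanism behind Perles' arrangement: a rigid subconfiguration can impose a \emph{closing condition} that is a polynomial of degree $\ge 2$ in the free parameters, pinning the realization to isolated, possibly irrational points and destroying unirationality. The crux of the conjecture is thus to show that the $3$-regularity on \emph{both} points and lines --- the very property the Perles arrangement violates --- forces every closing condition to be linear in some free parameter, so that it can always be solved within $\mathbb{Q}$. Concretely, I would invoke the reduction theory of configurations (Gropp, Gr\"unbaum): argue that every connected $n_3$ configuration is either ruler-constructible or reduces to a strictly smaller $n_3$ configuration by a realizability- and rationality-preserving move, and then induct, taking the cases $9\le n\le 12$ of Theorem \ref{n3_thm} as the base. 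I expect the hard part to be the irreducible, non-constructible configurations: here one must rule out a Perles-type quadratic core directly, by a case analysis of the possible rigid cycles in the Levi graph, and this is where a single argument uniform in $n$ is most likely to break down.
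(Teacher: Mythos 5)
The statement you are trying to prove is presented in the paper as an \emph{open conjecture}, not a theorem: the paper explicitly records that only the cases $9\le n\le 13$ are known (Sturmfels--White, Kocay) and that ``the general case is still open.'' So there is no proof in the paper to compare against, and your text should be judged as a research program. As such, it has a genuine gap: every step that would actually carry the argument is left as an unproven hypothesis that is essentially equivalent to the conjecture itself. Specifically, (i) the claim that every realizable $n_3$ configuration is ruler-constructible or reduces to a smaller one by a rationality-preserving move is precisely the hard content --- you assert it should follow from ``reduction theory'' but give no mechanism, and the known proofs for $n\le 13$ proceed by exhaustive case analysis of the (nontrivially enumerated) configuration lists, not by a uniform reduction; (ii) the claim that $3$-regularity forces all closing conditions to be linear is stated as ``the crux'' but no argument is offered for it.

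Two of the supporting heuristics are also unsound as stated. The expected-dimension count $2n-8-n=n-8>0$ cannot substitute for an argument: by Mn\"ev-type universality phenomena, realization spaces of point-line configurations can have arbitrary singularities and arbitrary (in particular irrational) fields of definition, so positivity of the expected dimension says nothing about the existence of rational points. And even granting that $R(C)$ is a positive-dimensional variety over $\mathbb{Q}$ with a smooth real point, rational points need not exist, let alone be Zariski-dense (the conic $x^2+y^2=3$ is a one-dimensional $\mathbb{Q}$-variety with real points and no rational points); density would follow from unirationality over $\mathbb{Q}$, but establishing that unirationality is exactly the conjecture in disguise. In short, the proposal correctly identifies where the difficulty lies, but it does not close any part of it, and no proof of the general statement is currently known.
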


The case $13_3$ was recently proved by Kocay \cite{Ko}, but the general case is still open.
\medskip

\section{Gr\"unbaum's Conjecture}

Now, we state our main result.

\begin{theorem}[Gr\"unbaum's Conjecture]
Any arrangement of $n\leq 8$ points is either not realizable with real coordinates or also has a rational realization.
\end{theorem}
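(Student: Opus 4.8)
The plan is to prove, by induction on $n$, the slightly stronger statement that \emph{every real realization of the arrangement can be approximated arbitrarily closely by a rational realization of the same combinatorial type}; the strengthening is what lets me control spurious incidences, since every non-collinearity in $\mathcal{S}^c$ is an open condition preserved under small perturbation, while the prescribed collinearities are maintained by construction. I work in $\mathbb{RP}^2$, where realizability and rationality are invariant under $PGL_3$ and rational frames map to rational frames. First I would dispose of the degenerate base cases: if all $n$ points are collinear they go to rational points of a rational line; and if the points contain no four in general position, a short argument shows the configuration is a \emph{near-pencil}, with all but at most one point on a common line, which one realizes by placing that line rationally, distributing its points rationally, and putting the exceptional point at a free nearby rational position off the relevant lines.

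The engine of the induction is the removal of a point of \emph{low incidence}. Call a point $p$ \emph{reducible} if it lies on at most two of the configuration's lines, a line meaning a maximal collinear set of size at least $3$. If a reducible $p$ exists, I delete it and apply the inductive hypothesis to the remaining $n-1$ points, which form a sub-configuration of a real-realizable arrangement and so admit a nearby rational realization. I then reinsert $p$. If $p$ lay on two configuration lines, each is spanned by two already-placed rational points, so $p$ is the intersection of two rational lines and is rational, landing near its real position and hence creating no unlisted collinearity; if $p$ lay on one line, it ranges over a rational line and I pick a rational point near its real position; and if it lay on none, I place it at a free rational point near its real position. In every case density of the rationals and the approximation hypothesis guarantee that the finitely many forbidden lines through pairs of placed points are avoided.

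It remains to treat arrangements in which every point is \emph{irreducible}, i.e.\ lies on at least three configuration lines, for here no reduction is available. These contain four points in general position, so I fix a frame: the unique real projective transformation carrying such a quadruple to $(1:0:0),(0:1:0),(0:0:1),(1:1:1)$ makes those four points rational while preserving the combinatorial type, leaving at most $n-4\le 4$ points to place. Since each remaining point lies on at least three lines, I would order them $q_1,\dots,q_r$ and resolve each $q_i$ as the intersection of two lines joining points preceding it, so that every $q_i$ becomes rational. Such an ordering fails only when some subset $T$ of the remaining points is \emph{closed}, in the sense that all incidences constraining $T$ involve only the frame and $T$ itself, forcing $T$ to be solved as a coupled system that could produce an irrational coordinate; the decisive point is that on $n\le 8$ points, with at most four points off the frame, every closed subsystem is small enough to be untangled into a linear construction, whereas the shortest closed system that genuinely forces a quadratic relation, and hence an irrationality, is exactly Perles' nine-point configuration.

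The hard part will be this last step: ruling out an unsolvable closed subsystem requires a finite but genuine case analysis of the possible irreducible cores on at most eight points. I would organize it by the number of points off the frame and by the incidence pattern, pruning the list with two structural observations. First, any line of size at least $4$, or any point of very high degree, forces extra collinearities that collapse the configuration into an already-handled denser or more degenerate case. Second, the only highly symmetric candidate cores on seven and eight points are the Fano $7_3$ and M\"obius--Kantor $8_3$ configurations, neither of which is realizable over $\mathbb{R}$ at all, so neither can occur as a real arrangement. What survives this pruning should be constructible by the linear ordering above, which closes the induction and establishes the conjecture.
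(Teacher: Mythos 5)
Your reduction machinery is sound and in one respect cleaner than the paper's: strengthening the induction to ``a rational realization arbitrarily close to the given real one'' lets you delete and reinsert a point lying on two configuration lines (as the intersection of two nearby rational lines), since every forbidden coincidence is an open condition. That is precisely the difficulty the paper flags when explaining why it does \emph{not} assume every point has degree at least three; the paper instead removes only degree-$\le 1$ points and compensates with a separate reduction (deleting any line carrying fewer than two points of degree $\ge 3$), and then pins down the surviving configurations by combining the Kelly--Moser lower bound on ordinary lines with the incidence count $3a+2(n-a)=3t$ to force the degree sequence. This yields an explicit, short list of cores: the $8_3$ and Fano configurations (not realizable over $\mathbb{R}$), an eight-point arrangement with seven triples in which one degree-three point is a free parameter that can be perturbed to a rational position, an eight-point arrangement with a collinear quadruple (a square plus ideal points), and a seven-point arrangement with six triples (unit square, its center, two ideal points), each checked to be rational.

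The genuine gap is that your final step --- ``what survives this pruning should be constructible by the linear ordering above'' --- is an announcement of the theorem rather than a proof of it. The whole content of Gr\"unbaum's conjecture is exactly the claim that no irreducible core on at most eight points forces a nonlinear, hence potentially irrational, consistency condition; you assert this instead of establishing it. You give no enumeration of the possible cores, no bound on the number of collinear triples (nothing playing the role of Kelly--Moser), and no verification that each surviving core either admits a resolving order or carries a free rational parameter. There is also a secondary unaddressed point: since a core on seven or fewer points can arise after deleting a reducible point from a larger arrangement, your induction needs cores to be realized in the \emph{approximating} sense, whereas the frame normalization produces only some rational realization, not one near the given real one; this is repairable (rational projectivities are dense in $PGL_3(\mathbb{R})$ and preserve rationality), but it is not done. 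Until the finite case analysis is actually carried out, the argument does not close.
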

\begin{proof} We consider four different cases. We will use the result of Kelly and Moser from \cite{KM}. Csima and Sawyer later improved this estimate in \cite{CsS}; however, we only need the bound below for small point sets.

\begin{theorem}[Kelly-Moser]
    In the Euclidean plane, among $n$ points, there are at least $3n/7$ ordinary lines (lines containing two points only) unless the $n$ points are collinear. 
\end{theorem}

Now, we state three simple assumptions about the arrangement. If they fail, we can reduce the number of points by analyzing the smaller arrangement.
The first one is quite obvious:

\begin{assumption}

Analyzing the various cases, we can assume that every point is incident to at least two collinear triples. Otherwise, we can remove the point. If the smaller configuration has a rational realization, we can choose a rational point on the line of the two remaining points of the original triple. We have infinitely many choices for points with rational coordinates, so we can choose a point without creating further collinearity not listed in the arrangement. 

\end{assumption}
One might argue we could assume every point is incident to at least three since if the arrangement without a point, $p$, incident to two triples, has a rational realization, then the lines of the triples through $p$ intersect in a point with rational coordinates. But in this case, we still have to guarantee that this $p$ is a new point of the arrangement and it won't create another collinear triple that is not on the list.

\begin{assumption}\label{ass2}
We can also assume that every line of collinear triples contains at least two points incident to at least three triples. Otherwise, we can remove the line and the points it is incident to, except the point, $p$, incident to at least three triples. If this smaller configuration has a rational realization, adding a line incident to $p$ with a rational slope gives a rational realization of the original configuration.

\end{assumption}


\medskip
Switching to the projective plane makes it easier to examine point-line arrangements in the real plane (two-dimensional Euclidean space). In this plane, we associate the point $(a,b)$ with a set of points in 3D homogeneous coordinates $(wa,wb,w)$ for all reals $w$. A point in the projective plane is rational if all three coordinates are rational. A rational arrangement ( point-line incidence structure) might contain points in the line at infinity ($w=0$). This is not an issue for us since such arrangements can be easily transformed into a rational arrangement in the Euclidean plane: Given a finite point set in the projective plane, find a regular line $ax+by+c=0$ with $a,b,c\in \mathbb{Q}$, which avoids all points in the arrangement. Apply the projective transformation
\[
T =
\begin{bmatrix}
1 & 0 & 0 \\
0 & 1 & 0 \\
a & b & c
\end{bmatrix}.
\]

This transformation moves all points of the arrangement to points where $w\neq 0$ (i.e. points of the Euclidean plane). $T$ also keeps rationality and preserves collinearity.

\begin{assumption}
If an arrangement has a real realization, then with a projective transformation, we can move three collinear points $a,b,c$ and a point outside their line, $d$ into points $a',b',c',d'$, with coordinates $a'=(1,0,0),b'=(0,1,0),c'=(1,1,0)$ and $ d'=(0,0,1)$. In more general terms, any arrangement of four points can be transformed into another set of four points if there is a bijection between the two point sets, preserving collinearity. 
\end{assumption}


\begin{itemize}
    \item First case: $\textbf{n=8}$ and no four points are collinear. Eight points define $\binom{8}{2}=28$ point-pairs where at least $\lceil 3\cdot8/7\rceil=4$ pairs determine ordinary lines by the Kelly-Moser bound. The remaining 24 pairs can span at most eight collinear triples. At least six collinear triples are needed to guarantee that every point is incident to at least two collinear triples. No point is incident to four or more triples since that would require at least nine points.
    \begin{itemize}
    
     \item If there are \textbf{eight collinear triples}, then every point has three triples incident to it. It is the unique $8_3$ configuration, which is realizable over the complex numbers but not over the reals.
    
     \item If there are \textbf{seven collinear triples}, then every triple contributes to three incidences, which gives 21 incidences. If there are $a$ points incident to three triples and $8-a$ to two triples, then $a*3+2(8-a)=21$, which has the $a=5$ solution. 
     
     The five degree-three\footnote{If a point is incident to $d$ collinear point sets (from $d$ distinct lines), we say its {\em degree} is $d$.} points span exactly one collinear triple.
     To see that, note that another way to calculate $a$ is using the number of triples where all three points are incident to three triples. If we denote the number of such triples by $x$, then $(3x+2(7-x))/3=a$, showing that there is one such triple. (Here, we used that every triple is incident to at least two other triples (Assumption \ref{ass2})).
     
     The points of this triple are denoted by $p_1,p_2,p_3$, and the remaining two degree-three points are denoted by $p_4,p_5$. Use a projective transformation sending $p_1,p_2,p_3,p_4$ to $$p'_1=(1,0,0),p'_2=(0,1,0),p'_3=(1,1,0),p'_4=(0,0,1).$$ 

     Then, $p'_5$ is a point outside the $x,y$-axes and the $y=x$ line. In a small neighbourhood of $p'_5$, let's select a rational point so that moving $p'_5$ there and leaving the other four degree-three vertices unchanged will keep the arrangement of collinear triples unchanged. Now, the remaining degree-two vertices have rational coordinates as well.

    \item If there are \textbf{six or fewer collinear triples}, then $a*3+2(8-a)\leq 18$, which means $a\leq 2$. But in this case, the two degree-three points are in at most one triple, so there are triples with less than two degree-three points, contradicting Assumption \ref{ass2}.
    \end{itemize}   

    \item Second case: If $\textbf{n=8}$ and there is a collinear quadruple. By Assumption \ref{ass2}, at least six collinear triples are incident to the collinear quadruple points. The four collinear points are denoted by $p_1,p_2,p_3,p_4,$ where $p_1$ and $p_2$ have degree three. Use a projective transformation sending $p_1,p_2,p_3,p_4$ to $$p'_1=(1,0,0),p'_2=(0,1,0),p'_3=(1,1,0),p'_4=(1,-1,0).$$ 

    There are four points left outside the line at infinity. The six collinear triples are each incident to two of them. All pairs are part of a collinear triple. Two triples are on horizontal and two are on vertical lines. The remaining two lines have slope one and slope negative one. This is a unique arrangement where the four points are vertices of an axis-parallel square. Its coordinates can be selected to be rational, so this arrangement has a rational representation.

    \item Third case: $\textbf{n= 7.}$ In this case, we don't have to consider collinear quadruples since it would require six more collinear triples with at least four more points, as we saw in the previous case. Suppose there are seven collinear triples, then $a*3+2(7-a)=21$, which means that all points are incident to three triples. It is the Fano plane, which has no real realization. 
    
    For six collinear triples, we get $a=4$. The four degree-three points span no collinear triple since that would require seven collinear triples. 
    If the arrangement has a real realization, then let's send the four degree-three points to the vertices of the unit square, $(0,0,1),(0,1,1),(1,0,1),(1,1,1)$. Then, the remaining points have degree two. By Assumption \ref{ass2}, every collinear triple has two points out of the four, i.e. they are on the lines of the sides and diagonals of the square. The unique positioning of the remaining three points are $(1/2,1/2,1),(1,0,0),(0,1,0)$, which gives a rational realization.

    \item Fourth case: ${n\leq 6.}$ There are four or fewer points with degree three, so the previous arguments work here.

\end{itemize}
\end{proof}

\section{An application}
Planar configurations, which are not realizable over the rationals, were the building blocks for nonrational polytopes. We refer the interested reader to the essential book on the subject by J\"urgen Richter-Gebert \cite{RGJ}. Here, we show another application, refuting two conjectures in discrete geometry.
Depending on the underlying field, there are different estimates of the maximum number of incidences. We count incidences as the number of point-line pairs in the arrangement where the point is on the line. Here, we are interested in arrangements in $\mathbb{R}^2$. 
For the sake of simplicity, we only consider arrangements where the number of points and lines is the same, $n$. By the Szemer\'edi-Trotter theorem  \cite{Sz-T}, the number of incidences is $O(n^{4/3})$. The implied constant is at most $2.5$, as shown in \cite{Pach+}. The Szemer\'edi-Trotter theorem is sharp up to the constant multiplier. There are examples of point-line arrangements of $n$ lines and $n$ points with $cn^{4/3}$ incidences. 
Points of the integer grid give the classic example of rich point-line incidences. Let $P=\lfloor\sqrt{n}\rfloor \times\lfloor\sqrt{n}\rfloor$ and $L$ is the set of lines containing at least $n^{1/3}$ points of $P$. In this arrangement, the number of lines is linear in $n,$ $cn$ for a universal constant $c>0$, and the number of incidences is $\Omega(n^{4/3})$. 
We refer to a recent paper \cite{BST} for details on this construction and the best-known bound. Integer grids are not the only known examples. There are other grid-like constructions that provide suitable pointsets for many incidences, as shown in \cite{Guth} and \cite{Cur}. 

\medskip
Let $P$ and $P_0$ be two sets of points and $L$ and $L_0$ be two sets of lines in the plane. We say that the pairs $(P,L)$
 and $(P_0,L_0)$ are isomorphic if there are bijections $f: P\rightarrow P_0$ and $g: L\rightarrow L_0$ such that $p\in P$ is incident to $\ell \in L$ iff 
 $f(p)\in P_0$ is incident to $g(\ell) \in L_0$. The following conjecture was stated in \cite{Soly_1}.

\begin{conjecture}\label{conj1}
For any set of points $P_0$
 and for any set of lines $L_0$, the maximum number of incidences between $n$ points and $n$ lines in the plane containing no subconfiguration isomorphic to $(P_0,L_0)$
 is $o(n^{4/3})$.
\end{conjecture}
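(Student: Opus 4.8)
The section heading announces that this conjecture is to be \emph{refuted}, and indeed the natural line of attack produces an explicit counterexample rather than a proof. The plan is to choose the forbidden pair $(P_0,L_0)$ to be a realizable configuration that nonetheless \emph{cannot} be drawn with rational coordinates, and to pit against it a \emph{rational} arrangement that is as rich in incidences as Szemer\'edi--Trotter allows. Concretely, I would take $(P_0,L_0)$ to be Perles' nine-point, nine-line configuration, and for the extremal family I would use the integer-grid arrangement described just above the conjecture: the points of $\lfloor\sqrt n\rfloor\times\lfloor\sqrt n\rfloor$ together with the lines that meet at least $n^{1/3}$ of them, which carries $\Omega(n^{4/3})$ incidences on a linear number of lines.

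The decisive structural remark is that this arrangement is rational. Every point has integer coordinates, so every line through two grid points has rational slope and intercept; hence the whole arrangement, and therefore every one of its subconfigurations, is a rational point-line incidence structure. Now suppose toward a contradiction that the grid contained a subconfiguration combinatorially isomorphic to Perles'. Then nine grid points and nine of the grid lines would reproduce Perles' incidence pattern, which is exactly a realization of Perles' configuration by rational points --- impossible. Thus the grid avoids $(P_0,L_0)$ while retaining $\Omega(n^{4/3})$ incidences. After balancing the point and line counts (discard surplus lines, or pad with isolated points, so that $|P|=|L|$; this changes the incidence count by at most a constant factor) one obtains, for infinitely many $n$, an arrangement of $n$ points and $n$ lines that is free of $(P_0,L_0)$ yet supports at least $c\,n^{4/3}$ incidences. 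This is incompatible with the bound $o(n^{4/3})$, so Conjecture \ref{conj1} is false.

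The step I expect to carry the weight is the assertion ``this is exactly a rational realization of Perles' configuration.'' The conjecture's isomorphism is an ``if and only if'' between incidence relations, so I would verify that matching Perles' bipartite incidence graph on nine rational points genuinely forces its defining collinearities and rules out degeneracies (for instance two of the nine points coinciding, or a spurious extra collinearity among the selected lines); once that is settled, the non-realizability of Perles' configuration over $\mathbb{Q}$ finishes the argument. Finally I would connect this to the paper's main theorem: since every realizable arrangement on at most eight points is rational, any realizable-but-irrational forbidden configuration needs at least nine points, so Perles' configuration is the \emph{smallest} obstruction the rationality argument can use, making the counterexample optimal in the size of $(P_0,L_0)$.
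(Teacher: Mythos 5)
Your refutation is correct and follows essentially the same route as the paper: forbid a configuration with no rational realization (Perles') and observe that the $\lfloor\sqrt{n}\rfloor\times\lfloor\sqrt{n}\rfloor$ integer-grid arrangement, being entirely rational in every subconfiguration, must avoid it while still carrying $\Omega(n^{4/3})$ incidences. Your added care about degeneracies and about balancing $|P|=|L|$ is sensible bookkeeping that the paper leaves implicit, but it does not change the argument.
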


The conjecture was proved in \cite{Soly_1} for the particular case when $P_0$ is a set of $k$ points with no three collinear, and the set of lines is given by the $\binom{k}{2}$ lines spanned by the points.

Mirzaei and Suk considered a grid-like forbidden configuration in \cite{MiSu} for which they gave a $O(n^{4/3-c})$-type incidence bound. They stated the following more ambitious conjecture.

\begin{conjecture}\label{conj2}
For any set of points $P_0$
 and for any set of lines $L_0$, there is a $c>0$ such that the maximum number of incidences between $n$ points and $n$ lines in the plane containing no subconfiguration isomorphic to $(P_0,L_0)$
 is $O(n^{4/3-c})$.
\end{conjecture}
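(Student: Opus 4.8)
The plan is to \emph{refute} Conjecture \ref{conj2} rather than establish it, by producing a single forbidden pattern $(P_0,L_0)$ together with an infinite family of arrangements that avoid it yet carry $\Omega(n^{4/3})$ incidences; since $\Omega(n^{4/3})$ exceeds $O(n^{4/3-c})$ for every fixed $c>0$, this disproves the conjecture, and the very same example also contradicts the weaker Conjecture \ref{conj1}. The bridge between this incidence question and the first part of the paper is the elementary but decisive remark that a set of rational points together with rational lines cannot contain any subconfiguration whose abstract incidence structure has no rational realization.

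Concretely, I would take $(P_0,L_0)$ to be a real realization of Perles' nine-point, nine-line configuration, which is realizable over $\mathbb{R}$ but, as recalled in the introduction, admits no realization with rational coordinates. For the incidence-rich arrangement I would use the integer grid described above: with $P=\lfloor\sqrt{n}\rfloor\times\lfloor\sqrt{n}\rfloor$ and $L$ the lines meeting $P$ in at least $n^{1/3}$ points, one has $|P|=\Theta(n)$, $|L|=\Theta(n)$, and $\Omega(n^{4/3})$ incidences; discarding or padding lines then yields exactly $n$ points and $n$ lines with the incidence count still $\Omega(n^{4/3})$.

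The heart of the matter is the claim that this grid contains no subconfiguration isomorphic to $(P_0,L_0)$. Suppose it did; then nine grid points and nine grid lines would, by the defining bijections, realize exactly the incidences of Perles' configuration. But grid points have integer coordinates and the selected lines have rational coefficients, so this subconfiguration would be a realization of Perles' abstract incidence structure entirely over $\mathbb{Q}$, contradicting its non-rationality. Hence no such copy can occur, the grid is Perles-free, and it witnesses the failure of both conjectures.

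The step I expect to require the most care is the passage from ``combinatorially isomorphic subconfiguration'' to ``rational realization.'' Because the isomorphism condition is an \emph{iff} on incidences, a combinatorial copy of Perles inside the grid reproduces exactly the prescribed collinear triples \emph{and} the prescribed non-incidences among the chosen nine points and nine lines; this is precisely what a realization demands, so the reduction is airtight and the presence of additional grid points on a selected line is harmless. The remaining tasks are purely routine: verifying the standard incidence bounds for the grid and balancing $|P|$ against $|L|$. Finally, any configuration that is realizable over $\mathbb{R}$ but not over $\mathbb{Q}$ could replace Perles' here, but Perles' is the most economical choice and ties the construction directly to the paper's main theorem.
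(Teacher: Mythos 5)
Your proposal is correct and follows essentially the same route as the paper: the paper likewise refutes Conjecture \ref{conj2} (and \ref{conj1}) by observing that any configuration realizable over $\mathbb{R}$ but not over $\mathbb{Q}$ --- Perles' configuration being the prime example --- can serve as $(P_0,L_0)$, since the $\lfloor\sqrt{n}\rfloor\times\lfloor\sqrt{n}\rfloor$ integer grid with its rich lines achieves $\Omega(n^{4/3})$ incidences while every subconfiguration of it is realized over $\mathbb{Q}$. Your elaboration of the key step (a combinatorially isomorphic copy inside the grid would constitute a rational realization, which cannot exist) is exactly the argument the paper leaves implicit.
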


In this form, the above conjectures don't hold. Any point-line arrangement with no realization using points with rational coordinates only will provide a counterexample for Conjectures \ref{conj1} and \ref{conj2}. 
Indeed, as mentioned earlier, the points of the $\lfloor\sqrt{n}\rfloor \times\lfloor\sqrt{n}\rfloor$ integer grid give an example for arrangements with many incidences, and every substructure there has points with integer coordinates. 

Very recently, Balko and Frankl also noticed that arrangements requiring irrational coordinates provide counterexamples to the above conjectures \cite{BF}. 

It is unclear what the characterization of unavoidable substructures in dense point-line arrangements would be. The only known dense arrangements are on points of a large Cartesian product. Their possible structure was analyzed in \cite{BST, Cur, Guth}. However, it is not known if there are arrangements that are very different from the examples described in these works.

\section{Other configurations}
In the previous examples, all arrangements that were not representable by rational coordinates only had at least four points on a line. 
As noted earlier, all $n_3$ configurations are conjectured to be realizable by rational coordinates. The general case is still open.
Noam Elkies sent me a configuration on ten points, not four on a line, which can not be realized using rational coordinates only \cite{El}. We present Elkies' construction in the appendix.

\section{Acknowledgements}
The author thanks Noam Elkies for providing references and sharing his construction and the anonymous referee for valuable suggestions. This research was partly supported by an NSERC Discovery grant and OTKA K grant no. 133819.

\section{Appendix}

We describe Elkies' construction. It is based on the properties of the elliptic curve, 
\[
 y^2 = (x-1) \cdot (x^2-2\cdot a-3) + ((a+1)\cdot x+1)^2/4
 \] 
but we stick to a geometric description similar to Perles' construction.

\medskip
\noindent
In the following construction, we use the labelling in Fig. \ref{calc}. 
\begin{enumerate}
    \item Select the vertices of an axis parallel square. The labels are $(1),(10),(4),(7).$
    \item On the vertical sides, select two points such that their line is horizontal (parallel to $(1),(10)$). The labels are $(3),(8).$
    \item The two lines determined by the point-pairs $(1),(8)$ and $(10),(3)$ are denoted by $\ell_1$ and $\ell_2$ resp.
    \item We will need four more lines for the construction. The first, $\ell_3$ is a horizontal line. Its intersection with $\ell_1$ is a new point labelled by $(2)$, and with $\ell_2$ is labelled by $(9).$
    \item The two lines, defined by the points $(9),(4)$ and $(2),(3)$, are denoted by $\ell_4$ and $\ell_5$.
    \item The last line, $\ell_6$ is the diagonal of the square, connecting $(1)$ and $(4)$. Its intersection point with $\ell_4$ is denoted by $(6')$ and its intersection with $\ell_5$ is denoted by $(6")$.
    \item Move $\ell_3$, keeping it parallel to $(1),(10)$ until the two points $(6')$ and $(6")$ coincide. This point is labelled by $(6).$ 
    \item Let's add to the construction the point at infinity in the line $\ell_3$ and label it by $(5)$.
\end{enumerate}

\noindent
The collinear triples are $((1),(5),(10)), ((3),(5),(8)), ((4),(5),(7)), ((2),(5),(9)), ((1),(3),(7)),\\ ((4),(8),(10)), ((1),(2),(8)), ((3),(9),(10)),
 ((1),(4),(6)), ((2),(3),(6)),$ and $ ((6),(7),(9)).$

 \medskip

\begin{figure}[h]
\centering
\includegraphics[scale=.40]{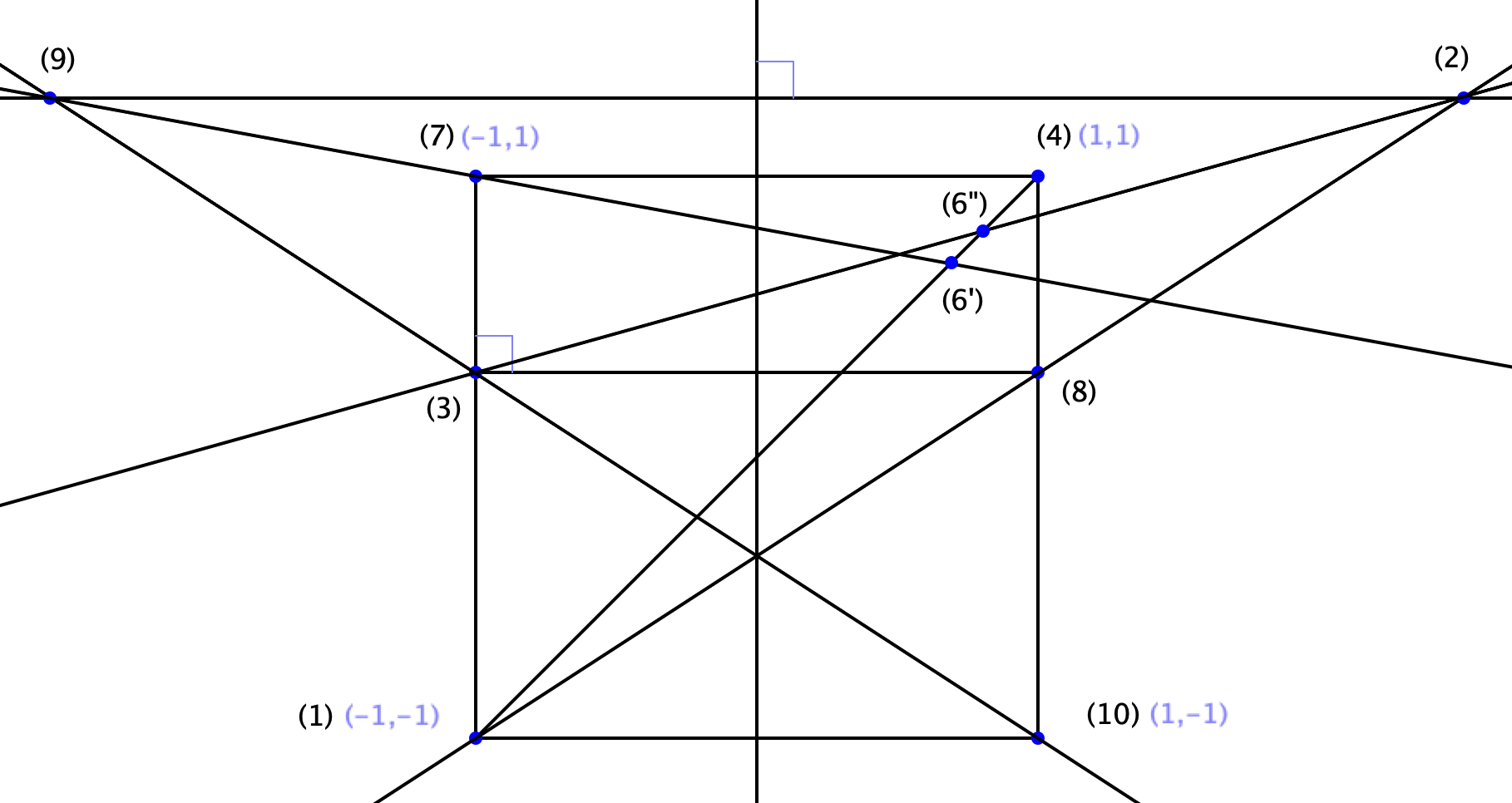}
\caption{Points $(6')$ and $(6")$ should be the same. }
\label{calc}
\end{figure}

\medskip

Now, we sketch the calculations showing that any real realization of this arrangement contains a point with irrational coordinates.
Given a realization of the 10-point configuration above, let's apply a plane-to-plane projectivity, which maps the selected four points as 
$$(4)\rightarrow (1,1)\quad (10)\rightarrow (1,-1)\quad (7)\rightarrow (-1,1)\quad (1)\rightarrow (-1,-1)$$

Let's calculate point $(6)$ coordinates using two parameters. The first, denoted by $a$ is the $y$-coordinate of point $(8)$ and the other, denoted by $b$ is the $y$-coordinatate of point $(2)$. Now that the points are given, we deviate a bit from our previous notations of lines, and a line connecting points $(i)$ and $(j)$ is denoted by $\ell_{i,j}$. There are two ways to find the coordinates of point $(6)$ from the fixed four points and the two parameters. 
\begin{itemize}
    \item First calculation: Points $(8)$ and $(3)$ have the same $y$ coordinates  $a$. The intersection of $\ell_{1,8}$ and the $y=b$ line determines point $(2)$. 
    Its $x$-coordinate is $(2b-a+1)/(a+1)$. (Note that then the $x$-coordinate of $(9)$ is $-(2b-a+1)/(a+1)$ by the symmetry induced by the selection of the four points above)
    The intersection of $\ell_{2,3}$ and $\ell_{1,4}$ (which is the $x=y$ line) is point $(6)$. Its $x,y$ coordinates are 
    \[
    \frac{-a^2+3ab+a+b}{a^2-ab+a+b+2}.
    \]

    \item Second calculation: We have to find the intersections of $\ell_{7,9}$ and $\ell_{1,4}$.
    Both of its coordinates are 
    \[
    \frac{-(a-1)(b+1)}{a(b-3)+3b-1}.
    \]
   The two results should be the same. 
   \[
   \frac{-a^2+3ab+a+b}{a^2-ab+a+b+2}= \frac{-(a-1)(b+1)}{a(b-3)+3b-1}.
   \]

   After solving it for $b$, we have
    \[
    b=\frac{3a^2+1+(a+1)\sqrt{2}\sqrt{-a^3+a^2+a+1}}{a^2+6a+1}.
    \]

    Neither the polynomial under the square root nor in the denominator has rational roots. For a possible rational representation, both $a$ and $b$ should be rational, which would require a rational solution of the 

    \[
    2y^2=x^3+x^2-x+1
    \]

    equation. The $a=\pm 1$ value is not a solution in our configuration since two pairs of points would be the same. This elliptic curve has no other finite rational points. It is birationally equivalent to the $y^2+y=x^3-x^2$ elliptic curve. This is a model for the modular curve $X_1(11)$ listed in \cite{elliptic}.

\end{itemize}

\end{document}